\documentclass[12pt,reqno]{amsart}
\usepackage[skip=5pt plus1pt, indent=20pt]{parskip}
\usepackage{palatino}
\usepackage{amsmath,amssymb,amsxtra,color,calligra,mathrsfs,tcolorbox}

\usepackage{xcolor} % Required for specifying custom colours
\colorlet{mdtRed}{red!50!black}
\colorlet{dblue}{blue!50!black}
\usepackage[colorlinks,pagebackref=true]{hyperref}
\hypersetup{linkcolor=dblue,citecolor=dblue,filecolor=dullmagenta,urlcolor=mdtRed}
\renewcommand*{\backref}[1]{}
\renewcommand*{\backrefalt}[4]{[{%
		\ifcase #1 Not cited.%
		\or $\uparrow$~#2.%
		\else $\uparrow$~#2.%
		\fi%
	}]}
\usepackage[all]{xy}
\usepackage{tikz,tikz-cd,tkz-graph,enumerate}
\usetikzlibrary{matrix,arrows,decorations.pathmorphing}

\usepackage[us,12hr]{datetime}
\usepackage{comment}

\DeclareMathOperator{\Pic}{\textnormal{Pic}}

\DeclareMathOperator{\Spec}{{\rm Spec}}
\DeclareMathOperator{\Id}{{\rm Id}}

\DeclareMathOperator{\dv}{\rm div}

\newcommand{\mc}[1]{\mathcal{#1}}

\newcommand{\bb}[1]{\mathbb{#1}}

\newcommand*{\homsheaf}{\mathcal{H}\,\,\kern -5pt {\large om}}
\newcommand{\parhomsheaf}{\mathcal{PH}\,\,\kern -5pt {\large om}}

\newcommand{\doi}[1]{\href{https://doi.org/#1}{doi:#1}}
\numberwithin{equation}{section}

\newtheorem{theorem}[equation]{Theorem}

\newtheorem{lemma}[equation]{Lemma}
\newtheorem{proposition}[equation]{Proposition}

\theoremstyle{definition}
\newtheorem{definition}[equation]{Definition}
\newtheorem{remark}[equation]{Remark}
\newtheorem{example}[equation]{Example}

\makeatletter
\newcommand\fnsymb[1]{\textsuperscript{\@fnsymbol{#1}}}
\newcommand\fnletter[1]{\lowercase{\textsuperscript{\@alph{#1}}}}
\newcommand\fnnum[1]{\textsuperscript{#1}}
\makeatother

\usepackage{marvosym} % For Email/Letter icons. 
\renewcommand{\email}[2][1]{\thanks{\textit{Email address}#1: \href{mailto:#2}{#2}}}

\renewcommand{\address}[2][1]{\thanks{\textit{Address}#1: #2}} 

\begin{document}
	\baselineskip=15.5pt 
	
	\title[Artin-Schreier Root Stacks and lifts of group actions]{Artin-Schreier Root Stacks and lifts of group actions}
	
	\author[S. Chakraborty]{Sujoy Chakraborty\fnnum{1}}
	\address[\fnnum{1}]{Department of Mathematics, 
		Indian Institute of Science Education and Research Tirupati, Andhra Pradesh - 517619, India.}
	\email[\fnnum{1}]{sujoy.cmi@gmail.com}
	\author[S. Majumder]{Souradeep Majumder\fnnum{2}}
	\address[\fnnum{2}]{Department of Mathematics, 
		Indian Institute of Science Education and Research Tirupati, Andhra Pradesh - 517619, India.} 
	\email[\fnnum{2}]{souradeep@iisertirupati.ac.in}
	
	%]	
	%	\thanks{Corresponding author: Sujoy Chakraborty}
	
	\subjclass[2020]{14A20, 14D23, 14L30.}
	
	\keywords{Stack, Wild ramification, Artin-Schreier extension, Root stack.}
	
	\begin{abstract}
		Let $G$ be a connected affine algebraic group defined over a field of positive characteristic. We prove that the action of $G$ on a smooth projective variety can be lifted to its associated Artin-Schreier root stacks, whenever $G$ has no non-trivial characters. The existence of a $G$-linearization on a certain tautological invertible sheaf on such Artin-Schreier root stacks is also shown. 
	\end{abstract}
	
	%	\baselineskip=15.5pt 
	%	
	%	\title[Orthogonal and symplectic parabolic bundles and stack of roots]{Orthogonal and symplectic parabolic bundles and root stacks}
	%	
	%	\subjclass[2010]{14D23, 14H60, 53B15, 53C05}
	%	\keywords{Parabolic bundle; Root stack} 	
	
	\maketitle
	\section{Introduction}
	Over an algebraically closed field of characteristic 0, any smooth orbifold curve is isomorphic to a root stack over its coarse moduli space. This is essentially due to the fact that, the stabilizer groups of the stacky points are always cyclic. Due to this nice property, a smooth orbifold curve is uniquely determined upto isomorphism by its coarse moduli curve and a finite list of integers corresponding to the orders of the (cyclic) stabilizer groups of the stacky points. In positive characteristic, such nice description fails due to the possibility of non-cyclic -- or even non-abelian -- stabilizer groups. This necessitates the study of finer invariants like ramification jumps in positive characteristic. Ramified covers of curves in characteristic $p>0$ can be studied using various ramification filtrations of their Galois groups. For example, given a $\bb{Z}/p{\bb{Z}}$-Galois cover of curves $Y\rightarrow X$ over an algebraically closed field $K$ of characteristic $p$,  taking the completions of the function fields $K(Y)$ and $K(X)$, one gets a Galois extension of degree $p$ of the form $K((y))/K((x))$, where $y$ satisfies an equation of the form $y^p-y=f(x)$. In this case, the ramification jump is $m=-v(f)$, where $v$ is the usual discrete valuation associated to $K((x))$.  
	The notion of Artin-Schreier root stacks was introduced by Kobin in \cite{K21} to study such $\bb{Z}/p\bb{Z}$-covers of curves. Using Artin-Schreier root stacks, a classification of stacky curves with wild ramification of order $p$ can be achieved, which parallels the case of characteristic zero. 
	
	It is a natural question to study group actions on these stacks, which can be useful in the study of coverings from an equivariant perspective. It can also be potentially useful in the study of sheaves on these stacks. For example, over $\mathbb{C}$, vector bundles on root stacks can be understood through the data of parabolic vector bundles on the coarse moduli variety in many situations \cite{Bor07}. Group actions on root stacks over $\bb{C}$ were studied in \cite{CP25}, and as an application, an equivalence between equivariant connections on the root stack and equivariant parabolic connections on its coarse moduli variety was established. In this article, we aim to study a similar question in the positive characteristic setup. Given an action of a connected affine algebraic group on a smooth projective variety, we aim to provide a criterion for lifting the action on the Artin-Schreier root stacks of Kobin \cite{K21}. Let $X$ be a nonsingular projective variety over an algebraically closed field of characteristic $p>0$ with an action of a connected affine algebraic group $G$. Suppose $D$ is an effective Cartier divisor on $X$ invariant under the action of $G$; see Section \ref{sec:lift-of-action} for more details. The invertible sheaf $L:=\mc{O}(D)$  admits a tautological section $s\in H^0(X, L)$ whose divisor of zeros is  $D$. To introduce the data of ramification jump, fix a positive integer $m$ coprime to $p$, and another section $f\in H^0(X, L^{\otimes m})$ which has no common zeros with $s$. Consider the Artin-Schreier root stack $\wp_m^{-1}((L,s,f)/X)$ associated to this data; see Section \ref{section:preliminaries} for more details. Our first main result is the following. 
	\begin{theorem}[\text{Theorem \ref{thm:group-action-artin-schreier}}]
		Suppose $G$ has no nontrivial characters. Assume that the restriction of $f$ to the divisor $D$ is invariant for the induced $G$-linearization on $L^{\otimes m}$. Then the $G$-action on $X$ admits a lift to a $G$-action on the Artin-Schreier root stack $\mathcal{X}^{\nu}_m:= \wp_m^{-1}((L,s,f)/X)$. 
		%		Namely, there exists an action morphism
		%		$
		%		\sigma': G \times \mathcal{X}^{\nu}_m \longrightarrow \mathcal{X}^{\nu}_m$ such that the following diagram is $2$-commutative, where $\pi:\mc{X}^{\nu}_m\longrightarrow X$ denotes the coarse moduli map:
		%		\begin{align}
			%			\begin{gathered}
				%				\xymatrix{
					%					G \times \mathcal{X}^{\nu}_m \ar[r]^(.6){\sigma'} \ar[d]_{\text{Id}\times\pi} & \mathcal{X}^{\nu}_m \ar[d]^{\pi} \\
					%					G \times X \ar[r]^(.55){\sigma} & X
					%				}
				%			\end{gathered}
			%		\end{align}
	\end{theorem}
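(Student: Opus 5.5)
The plan is to use the description of $\mathcal{X}^{\nu}_m$ as a $2$-fibre product. Recall from Section \ref{section:preliminaries} that $\wp_m^{-1}((L,s,f)/X)$ is the pullback of the Artin-Schreier morphism $\wp_m$ on $[\mathbb{P}(1,m)/\mathbb{G}_a]$ along the classifying morphism $\varphi_{(L,s,f)}\colon X\to[\mathbb{P}(1,m)/\mathbb{G}_a]$. Here $\mathbb{G}_a$ acts on $\mathbb{P}(1,m)$ by $(u,v)\mapsto(u,v+au^m)$. By the universal property of the fibre product, a morphism $\sigma'\colon G\times\mathcal{X}^{\nu}_m\to\mathcal{X}^{\nu}_m$ lying over $\sigma\colon G\times X\to X$ is equivalent to a $2$-isomorphism
\[
\varphi_{(L,s,f)}\circ\sigma\;\Rightarrow\;\varphi_{(L,s,f)}\circ pr_2 ,
\]
provided it is compatible with the group law. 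Since $G\times\mathcal{X}^{\nu}_m=(G\times X)\times_X\mathcal{X}^{\nu}_m$, pulling $\mathcal{X}^{\nu}_m$ back along $\sigma$ and along $pr_2$ gives the Artin-Schreier root stacks of $\sigma^*(L,s,f)$ and $pr_2^*(L,s,f)$. So it suffices to produce an isomorphism of these triples in $[\mathbb{P}(1,m)/\mathbb{G}_a](G\times X)$, and then check the cocycle (associativity) condition.

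\textbf{Step 1: the line bundle and $s$.} The divisor $D$ is $G$-invariant, so $\sigma^{-1}(D)=pr_2^{-1}(D)=G\times D$ as Cartier divisors. This gives a canonical isomorphism $\alpha\colon\sigma^*L=\mathcal{O}(\sigma^*D)\xrightarrow{\sim}pr_2^*L$ sending the tautological section $\sigma^*s$ to $pr_2^*s$. Because it comes from an equality of divisors, $\alpha$ automatically satisfies the cocycle condition on $G\times G\times X$, so it is a $G$-linearization of $L$. The hypothesis that $G$ has no nontrivial characters (together with $G$ connected) guarantees that this linearization is the unique one, and it is the one meant in the statement.

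\textbf{Step 2: matching $f$ up to $\mathbb{G}_a$.} Set
\[
\delta:=\alpha^{\otimes m}(\sigma^*f)-pr_2^*f\in H^0(G\times X,\,pr_2^*L^{\otimes m}).
\]
The hypothesis that $f|_D$ is invariant says exactly that $\delta$ vanishes on $G\times D$. I then want to write $\delta=a\cdot(pr_2^*s)^m$ for a regular function $a$ on $G\times X$. Once this holds, $a$ is a $\mathbb{G}_a$-valued function with $(\alpha,a)$ carrying $\sigma^*(L,s,f)$ to $pr_2^*(L,s,f)$, since $\mathbb{G}_a$ acts by $f\mapsto f+as^m$. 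As $X$ is projective and $G$ is affine, $H^0(G\times X,\mathcal{O})=\mathcal{O}(G)$, so $a$ is just a function on $G$.

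\textbf{Step 3: associativity.} I will check the cocycle condition for the $2$-isomorphism $(\alpha,a)$ on $G\times G\times X$. For $\alpha$ this is the linearization property. For $a$ the required identity is $a(gh)=a(h)+a(g)$, twisted by the action, where the twisting is trivial because $a$ only depends on the $G$-factor. It follows formally from the definition of $a$ as a difference quotient: $g^*f-f$ is additive along the composite $(gh)^*f-f=h^*(g^*f-f)+(h^*f-f)$, and one divides through by $s^m$, which is invariant. The identity and compatibility $2$-cells are then immediate.

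\textbf{Main obstacle.} The hardest point is Step 2: vanishing of $\delta$ on $G\times D$ a priori only gives divisibility by $pr_2^*s$, not by $(pr_2^*s)^m$. I would first check whether, in Kobin's moduli description, the data $f$ only matters modulo $s$ in the relevant sense. Kobin's $\wp_m$ also involves the term $g^p-gt^{m(p-1)}$, which may absorb the lower-order part. If it does not, I would read the invariance hypothesis scheme-theoretically on $mD$ and argue with the $\mathcal{O}(-mD)$-divisibility directly. Failing both, I would use that $\delta$ restricted to $\{e\}\times X$ is $0$ and exploit connectedness of $G$ to force the remaining section of $pr_2^*L^{\otimes(m-j)}$ to be a multiple of $s^{m-j}$.
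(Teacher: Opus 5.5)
Your route differs from the paper's and does go through, but only if you close Step 2 with the second of your three options. That option is in fact the hypothesis as the paper states it in Theorem~\ref{thm:group-action-artin-schreier}: by \eqref{eqn:invariant-section}, $f|_{mD}$ is invariant scheme-theoretically on $mD$, not just on $D$. With that reading Step 2 is one line. $\delta$ vanishes on $G\times mD$, so it is a section of $pr_2^*L^{\otimes m}\otimes\mc{O}(-m(G\times D))\cong\mc{O}_{G\times X}$, the isomorphism being multiplication by $(pr_2^*s)^m$ because $L=\mc{O}(D)$. Hence $\delta=a\,(pr_2^*s)^m$ with $a\in\mc{O}(G)$.

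Your other two fallbacks would fail. Take $G=\bb{G}_a$ acting on $\bb{P}^1$ by $[z_0:z_1]\mapsto[z_0:z_1+cz_0]$, with $s=z_0$ and $f=z_1^m$, $m\geq 2$. Then $\delta=(z_1+cz_0)^m-z_1^m=mc\,z_0z_1^{m-1}+\cdots$ vanishes on $G\times D$ and on $\{e\}\times X$, but is not divisible by $z_0^2$ since $p\nmid m$. A $2$-morphism $\sigma^*(L,s,f)\to pr_2^*(L,s,f)$ in $[\bb{P}(1,m)/\bb{G}_a](G\times X)$ consists of the forced $\alpha$ and a function $c$ with $\alpha^{\otimes m}(\sigma^*f)+c\,(pr_2^*s)^m=pr_2^*f$. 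So no reinterpretation of Kobin's data can absorb lower-order terms in your framework; note also that \eqref{eqn:eq-1} involves all of $f|_{mD}$.

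Two smaller repairs are needed. First, $\mc{X}^\nu_m$ is the \emph{normalized} fibre product. Identifying $\sigma^*\mc{X}^\nu_m$ and $pr_2^*\mc{X}^\nu_m$ with the root stacks of the pulled-back triples therefore uses that normalization commutes with smooth base change ($pr_2$ and $\sigma$ are smooth). Second, the universal property gives only ``$2$-isomorphism $\Rightarrow$ morphism over $\sigma$'', not an equivalence, though that is the only direction you use.

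With those fixes, compare with the paper. The paper never touches the classifying map. It uses Kobin's explicit tuples $(\varphi,M,t,g,\psi)$ for the unnormalized pullback $\mc{X}_m$ and sets $\alpha_T(h,\tau)=(\sigma\circ u,M,t,g,(u^*\phi)\circ\psi)$. The $mD$-invariance is used only to verify \eqref{eqn:eq-1}, and since these tuples only see $f|_{mD}$, no function $a$ appears. It then passes to normalizations via \cite[Proposition A.6]{AB21} (arXiv version only), which costs the ``dominant on irreducible components'' argument, and it leaves the action axioms as routine.

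You instead make $\Phi_{(L,s,f)}$ $G$-equivariant for the trivial action on $[\bb{P}(1,m)/\bb{G}_a]$ via $(\alpha,a)$, with $a\colon G\to\bb{G}_a$ an additive homomorphism. It can be nonzero: $m=1$, $f=z_1$ in the example above gives $a(c)=c$. What your route buys:
associativity becomes your explicit cocycle identity;
the action on $\mc{X}_m$ is formal from the universal property;
the passage to $\mc{X}^\nu_m$ needs only smooth base change for normalization, not Proposition A.6.
The price is identifying $2$-morphisms in the quotient stack and carrying the extra datum $a$.

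Your Step 1 is correct as written. It uses the no-characters hypothesis only for uniqueness of the linearization, so that ``the induced linearization'' is unambiguous, not for the existence of one fixing $s$.
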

	
	\noindent
	In the final section, we discuss the existence of a tautological invertible sheaf $\mc{N}$ on the Artin-Schreier root stack $\mc{X}_m^{\nu}$ which serves as a $p$-th root of the pullback to $\mc{X}_m^{\nu}$ of the line bundle $L$ on $X$. We prove that, the $G$-action induced on $\mc{X}_m^{\nu}$ lifts to a $G$-linearization on this tautological invertible sheaf $\mc{N}$.
	
	\section{Preliminaries}\label{section:preliminaries}
	\subsection{Artin-Schreier root stacks}\hfill\\
	%	Ramification jumps play a crucial role in the study of $\bb{Z}/p\bb{Z}$-covers of curves in characteristic $p>0$. In order to incorporate the data of ramification jump into the Artin-Schreier root stacks, weighted projective lines are used. 
	Let $m\geq 1$ be a positive integer coprime to a given prime number $p$. The \textit{weighted projective line} $\bb{P}(1,m)$ is a stacky curve with coarse moduli space $\bb{P}^1$ whose only stacky point is $\infty := [0:1]$ with stabilizer group $\bb{Z}/m\bb{Z}$. Its $T$-valued points for a $K$-scheme $T$ is given by tuples $(\mc{L},s,f)$, where $\mc{L}\in \Pic(T)$, and $s$ and $f$ are sections of $\mc{L}$ and $\mc{L}^{\otimes m}$, respectively, such that they do not share any common zeros (see \cite[Corollary 6.7]{K21}). The coarse moduli map is given by  $\bb{P}(1,m)\longrightarrow \bb{P}^1$, $[x:y]\mapsto [x:y^m]$. 
	
	Consider the isogeny $\wp:\bb{G}_a \longrightarrow\bb{G}_a$\,, \,$x\mapsto x^p-x$. This extends to a cyclic $p$-cover $\Psi : \bb{P}(1,m)\longrightarrow\bb{P}(1,m)$\,, \,$[u:v]\mapsto[u^p:v^p-vu^{m(p-1)}]$. On the other hand, $\bb{G}_a$ acts on $\bb{P}(1,m)$ by the rule $x\cdot[u:v]:= [u:v+xu^m]$. It is easy to see that this action commutes with $\Psi$, and thus we get an induced morphism on the quotient stack $[\bb{P}(1,m)/\bb{G}_a]$. This is known as the \textit{universal Artin-Schreier cover with ramification jump m}, denoted by 
	$$\wp_m : [\bb{P}(1,m)/\bb{G}_a]\longrightarrow[\bb{P}(1,m)/\bb{G}_a]\,.$$
	
	According to \cite[Definition A.3]{AB21}, a locally Noetherian algebraic stack $\mc{X}$ is said to be \textit{normal} if there is a smooth surjection $U\longrightarrow \mc{X}$ where $U$ is a normal scheme. A representable morphism $\nu:\mc{X}^{\nu}  \longrightarrow \mc{X}$ from an algebraic stack $\mc{X}^{\nu}$ is said to be a \textit{normalization} of $\mc{X}$ if for any scheme $U$ and any smooth morphism $U \rightarrow \mc{X}$ , the
	pullback $\mc{X}^{\nu} \times_{\mc{X}} U \rightarrow U$ is the normalization of $U$. 	By \cite[Lemma A.4]{AB21}, the normalization of a Noetherian algebraic stack always exists, and it is unique upto unique isomorphism.
	
	\begin{definition}[\text{\cite[Definition 6.9]{K21}}]\label{def:artin-schreier-root-stack}
		Let $X$ be a $K$-scheme. Fix a positive integer $m$ coprime to $p$. Choose a line bundle $L$ on $X$, and two sections $s\in H^0(X,L)$ and $f\in H^0\left(X,L^{^{\otimes m}}\right)$ having no common zeros at any point of $X$. The  \textit{Artin-Schreier root stack of $X$ along }$(L,s,f)$, denoted by the symbol $\wp_{m}^{-1}((L,s,f)/X)$, is defined to be the normalized pullback of the diagram 
		\begin{align}\label{eqn:artin-schreier-diagram}
			\begin{gathered}
				\begin{tikzpicture}[xscale=4,yscale=2]
					\node at (0,1) (a) {$\wp_{m}^{-1}((L,s,f)/X)$};
					\node at (1,1) (b) {$[\bb{P}(1,m)/\bb{G}_{a}]$};
					\node at (0,0) (c) {$X$};
					\node at (1,0) (d) {$[\bb{P}(1,m)/\bb{G}_{a}]$};
					\draw[->] (a) -- (b);
					\draw[->] (a) -- (c);
					\draw[->] (b) -- (d) node[right,pos=.5] {$\wp_{m}$};
					\draw[->] (c) -- (d) node[above,pos=.5] {$\Phi_{(L,s,f)}$};
					\node at (.15,.7) {$\nu$};
					\draw (.1,.6) -- (.2,.6) -- (.2,.8);
				\end{tikzpicture}
			\end{gathered}
		\end{align}
		where $\wp_{m} : [\bb{P}(1,m)/\bb{G}_{a}]\rightarrow [\bb{P}(1,m)/\bb{G}_{a}]$ is the universal Artin--Schreier cover, and the bottom arrow $\Phi_{(L,s,f)}$ is the composition of the morphism $X\longrightarrow\bb{P}(1,m)$ arising from $(L,s,f)$ followed by the quotient map $\bb{P}(1,m)\rightarrow [\bb{P}(1,m)/\bb{G}_{a}]$.
	\end{definition}
	
	\begin{remark}\label{rem:valued-points}
		For a $K$-scheme $T$, it is difficult to give a nice description of the $T$-valued points of the Artin-Schreier root stack $\wp_{m}^{-1}((L,s,f)/X)$, partly due to the fact that it is a normalized pullback instead of an actual pullback. On the other hand, the actual (unnormalized) pullback of the diagram \eqref{eqn:artin-schreier-diagram}, namely $$X\times_{_{(L,s,f),\,[\bb{P}(1,m)/\bb{G}_{a}],\,\wp_m}}[\bb{P}(1,m)/\bb{G}_{a}]$$ 
		has a rather explicit description of its $T$-valued points,  which will be useful later. Let $D:= \dv(s)$ denote the divisor of zeros of $s$, which is a closed subscheme of $X$. It is shown in \cite[Remark 6.10]{K21} that the $T$-valued points are given by tuples of the form 
		$$\left(\varphi: T \longrightarrow X,\, M,\, t,\, g,\, \psi: M^{^{\otimes p}} \overset{\simeq}{\longrightarrow} \varphi^* L\right),$$
		where
		\begin{enumerate}[$\bullet$]
			\item $\varphi$ is a morphism,
			\item $M\in \Pic(T)$ and $t\in H^0(T,M)$,
			\item $g\in H^0\left(mE,M^{^{\otimes m}}|_{_{mE}}\right)$ where $E:= \dv(t)$, and
			\item $\psi: M^{^{\otimes p}}\overset{\simeq}{\longrightarrow} \varphi^*L$ is an $\mc{O}_T$-linear isomorphism such that $\psi(t^{^{\otimes p}}) = \varphi^*s$. Moreover, looking at the resulting Cartesian diagram \begin{align}\label{diagram-1}
				\begin{gathered}
					\xymatrix{mpE \ar[rr]^{\varphi_{_{mD}}} \ar@{^{(}->}[d] && mD \ar@{^{(}->}[d] \\ 
						T \ar[rr]^{\varphi} && X
					}
				\end{gathered}
			\end{align}
			%			arising from the equalities of divisors
			%			$$pE = \text{div}(t^{^{\otimes p}}) = \text{div}(\varphi^* s) = \varphi^*(\dv(s)) = \varphi^* D\,,$$
			the restriction $\psi_{_{mpE}}: M^{^{\otimes mp}}|_{_{mpE}} \overset{\simeq}{\longrightarrow} \varphi_{_{mD}}^*(L^{{^{\otimes m}}}|_{_{mD}})$ is required to satisfy				\begin{align}\label{eqn:eq-1}
				\psi_{_{mpE}}^{^{\otimes m}}\left(g^p - t^{m(p-1)}g\right) = \varphi_{_{mD}}^* (f|_{_{mD}}).
			\end{align} 
		\end{enumerate}
	\end{remark}
	% In Proposition \ref{prop:T-points}, we shall see another equivalent description of the $T$-valued points in a more general setup of Artin-Schreier-Witt case. The equivalence of these two descriptions is discussed in Remark \ref{rem:T-points-comparison}.
	
	\begin{remark}\label{rem:special-atlas}
		Let us mention a special case when the valued points of the Artin-Schreier root stack can be described explicitly. Let $u:U\longrightarrow \mc{X}_m$ be a smooth atlas for the unnormalized stack, where $U$ is a scheme. Consider the fiber product $U^{\nu} := U\times_{\mc{X}_m}\mc{X}_m^{\nu}$\,. Since $\mc{X}^{\nu}_m\rightarrow\mc{X}_m$ is representable,  $U^{\nu}$ is again a scheme; moreover, it is the  normalization of $U$. Also, the pullback morphism $u^{\nu}: U^{\nu}\longrightarrow \mc{X}_m^{\nu}$ is again a smooth atlas for $\mc{X}_m^{\nu}$. Clearly, the map $u^{\nu}$ is essentially uniquely determined by the composition $U^{\nu}\xrightarrow{\,\,h\,\,} U \xrightarrow{\,\,u\,\,} \mc{X}_m$ due to properties of normalizations. Now, as in Remark \ref{rem:valued-points}, if $u$ is given by the tuple 
		$$\bigl(\varphi': U \longrightarrow X,\, M',\, t',\, g',\, \psi': M'^{^{\otimes p}} \overset{\simeq}{\longrightarrow} \varphi'^* L\bigr),$$ 
		then the composition $U^{\nu}\xrightarrow{\,\,h\,\,} U \xrightarrow{\,\,u\,\,} \mc{X}_m$ is given by the tuple
		$$\bigl(h^*\varphi': U^{\nu} \longrightarrow X,\, h^*M',\, h^*t',\, h^*g',\, h^*\psi': M'^{^{\otimes p}} \overset{\simeq}{\longrightarrow} (h\circ\varphi')^*L \bigr)\,.$$
		This tuple uniquely determines the map $u^{\nu}: U^{\nu}\longrightarrow\mc{X}^{\nu}_m$ above.
		
		Thus, for any smooth atlas $V$ for $\mc{X}^{\nu}_m$ which can be obtained as the pullback of a smooth atlas for $\mc{X}_m$, the $V$-valued points of $\mc{X}^{\nu}_m$ can be described explicitly.
	\end{remark}
	\section{Lifting group action to Artin-Schreier root stack}\label{sec:lift-of-action}
	In what follows, $K$ denotes an algebraically closed field of characteristic $p>0$. The term \textit{variety} over $K$ means an integral separated scheme of finite type over $\Spec K$. 
	Let $G$ be a connected affine algebraic group over  $K$. Suppose $X$ is a nonsingular projective 
	variety with a $G$-action $\sigma: G\times X \to X$. Let $p_2: G\times X \to X$ be the projection map 
	onto the second factor. Recall that a \textit{linearization} of the $G$-action $\sigma$ on $X$ 
	to an invertible sheaf $L$ on $X$ is an isomorphism 
	$$\Phi : p_2^*L \stackrel{\simeq}{\longrightarrow} \sigma^*L$$ 
	of sheaves of $\mc O_{G\times X}$-modules on $G\times X$ satisfying the following 
	cocycle condition: 
	\begin{equation*}
		(\mu\times\Id_X)^*\Phi = (\Id_G\times\sigma)^*\Phi\circ p_{23}^*\Phi,
	\end{equation*}
	where $p_{23} : G\times G\times X \to G\times X$ is the projection morphism onto the 
	second and third factors, and $\mu : G\times G \to G$ is the group operation on $G$. For $G$ connected, the above cocycle condition 
	is automatic for an invertible sheaf $L$ on $X$; see \cite[\S\,7.2, Lemma 7.1]{Dol}.
	
	Let us assume that $G$ has no non-trivial characters. This holds, for example, for any group which satisfies  $G=[G, G]$; examples include connected semisimple groups. Suppose $X$ admits an effective Cartier divisor $D$ which is invariant under the $G$-action, in the sense that $p_2^* D = \sigma^* D$. The line bundle $\mc{O}(D)$ admits a canonical global section $s\in H^0(X,\mc{O}(D))$ satisfying $\dv(s) =D$.
	%	Let $L \in \text{Pic}(X)$, and $s \in H^0(X, L)$. 
	%	Let $D = \text{div}(s)$ be the associated Cartier divisor (see \cite[p. 157]{HartshorneAG}). The ideal sheaf $\mc{O}(-D)\subset \mc{O}_X$ gives rise to a closed subscheme of $X$, which we shall again denote by the same symbol $D$. 
	%	
	\begin{lemma}\label{lem:linearization}
		Suppose $G$ has no non-trivial characters. Given a $G$-invariant effective Cartier divisor $D$ as above, one can construct a $G$-linearization
		\begin{align}
			\phi: p_2^* \mc{O}(D) \overset{\simeq}{\longrightarrow} \sigma^* \mc{O}(D)\,\,\,\text{satisfying}\,\,\,\phi(p_2^*s) = \sigma^*(s)\,.
		\end{align}
		In other words, $s$ is an invariant section of the $G$-linearized line bundle $L:=\mc{O}(D)$.
	\end{lemma}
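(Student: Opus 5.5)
The plan is to obtain the linearization directly from the equality of divisors $p_2^*D=\sigma^*D$, and then use the absence of characters to make it unique, so that it automatically satisfies the cocycle condition.

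First, since $p_2$ and $\sigma$ are flat (indeed $\sigma$ is the composite of the automorphism $(g,x)\mapsto(g,gx)$ of $G\times X$ with $p_2$), pulling back Cartier divisors commutes with pulling back the associated invertible sheaves and canonical sections. Thus
$$p_2^*\mc{O}(D)=\mc{O}_{G\times X}(p_2^*D),\qquad \sigma^*\mc{O}(D)=\mc{O}_{G\times X}(\sigma^*D),$$
and these two sheaves carry the canonical sections $p_2^*s$ and $\sigma^*s$ respectively.

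Both are sub-sheaves of the sheaf of rational functions on the integral scheme $G\times X$. Because $p_2^*D=\sigma^*D$ as Cartier divisors, they coincide as sub-sheaves. Take $\phi$ to be the identity isomorphism $\mc{O}(p_2^*D)=\mc{O}(\sigma^*D)$. In terms of local equations this says the following: if $D$ is given by local data $\{(U_i,h_i)\}$, then $p_2^*h_i$ and $\sigma^*h_i$ differ by units, and these units are compatible on overlaps. Both canonical sections correspond to the constant function $1$, so $\phi(p_2^*s)=\sigma^*s$ holds tautologically.

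It remains to verify that $\phi$ is a genuine linearization. The cocycle condition is automatic since $G$ is connected, by \cite[\S\,7.2, Lemma 7.1]{Dol}. Even so, it is worth giving a direct argument, because it is where the hypothesis on characters enters. Both $(\mu\times\Id_X)^*\phi$ and $(\Id_G\times\sigma)^*\phi\circ p_{23}^*\phi$ are isomorphisms between the same line bundles on $G\times G\times X$, and both send the pullback of $s$ to the pullback of $s$. They therefore agree on the dense open set where that section is nonzero, and hence agree everywhere.

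More conceptually, any two linearizations of $L$ differ by a character of $G$. Indeed, their ratio is a unit in $H^0(G\times X,\mc{O}^*)=H^0(G,\mc{O}^*)$, using that $X$ is projective and integral. By Rosenlicht's theorem this unit is a scalar times a character, and the identity normalization kills the scalar. Since $G$ has no nontrivial characters, the linearization is unique. So whatever linearization one starts with, for example any linearization from the standard theory, it must coincide with $\phi$ and must fix $s$ up to a character, i.e.\ exactly.

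The main obstacle is the rigorous justification that divisor pullback along $\sigma$ is compatible with sheaf and section pullback. This requires that $\sigma^*D$ be a well-defined effective Cartier divisor, meaning the local equations pull back to nonzerodivisors; flatness of $\sigma$ supplies this. Beyond that the argument is bookkeeping.
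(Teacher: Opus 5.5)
Your argument is correct, but it does not follow the paper's route. The paper gives no proof of its own. It defers to \cite[Lemma 2.2.2]{CP25}, and the $\mathbb{G}_m$-example after the lemma suggests the intended argument works as follows. Start from some linearization $\Phi$ of $\mathcal{O}(D)$. Since $\Phi(p_2^*s)$ and $\sigma^*s$ both have divisor $p_2^*D=\sigma^*D$, they differ by a unit on $G\times X$, i.e.\ a function on $G$. The cocycle condition makes this unit a character, and the hypothesis makes it trivial.

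You instead construct $\phi$ directly as the identification $\mathcal{O}(p_2^*D)=\mathcal{O}(\sigma^*D)$, with flatness of $p_2$ and $\sigma$ justifying the pullbacks. Then $\phi(p_2^*s)=\sigma^*s$ holds by construction. You verify the cocycle condition by comparing both sides on the dense open subset of the integral scheme $G\times G\times X$ where the pulled-back $s$ generates the line bundle. This is self-contained and avoids both the existence of a prior linearization and Rosenlicht's theorem.

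It also proves more than you claim. Contrary to your remark, the character hypothesis plays no role in the direct cocycle check. Your construction therefore gives the lemma for every connected $G$, and the hypothesis is needed only for your uniqueness statement, or if one insists on starting from a pre-given linearization. In the $\mathbb{G}_m$-example, your $\phi$ is the paper's $\Phi$ twisted by the character $t\mapsto t^{d}$, and it does fix $s_D$. So that example only shows that a particular linearization can fail to fix $s$, not that the hypothesis is necessary.
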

	\begin{proof}
		This follows from the same argument as in \cite[Lemma 2.2.2]{CP25}; the same proof works over any algebraically closed field. 
	\end{proof}
	\begin{example}
		Let $G\ =\  \bb{G}_m$ be the multiplicative group over $K$. For an integer $d$, the action of $\bb{G}_m$ on $\bb{P}^1_K$ given by 
		\begin{align}
			\sigma:\bb{G}_m\times\bb{P}^1_K&\longrightarrow\bb{P}^1_K\\
			(t,[x:y])&\mapsto [t^dx:y]
		\end{align}
		fixes the point $[0:1]$. Let $D$ be the reduced effective divisor given by the point $[0:1]$. One can show that $\Phi_t(s_D) = t^{-d}\sigma_t^*(s_D)$, where $t\mapsto t^{-d}$ is a non-trivial character of $\bb{G}_m$. Thus, the condition of $G$ having no nontrivial characters is necessary in Lemma \ref{lem:linearization}.
	\end{example}
	%	Now, since $$\text{div}(s) =\text{div}(s_D) =D,$$
	%	one can construct an $\mc{O}_X$-linear isomorphism $\eta:\mc{O}(D)\simeq L$ satisfying $\eta(s_D) =s$. This $\eta$ enables one to transfer the $G$-linearization from $\mc{O}(D)$ to $L$, thus ensuring that there exists a $G$-linearization
	%	\begin{align}\label{eqn:eq-3}
		%		\phi:p_2^*L\overset{\simeq}{\longrightarrow} \sigma^*L \ \ \text{satisfying} \ \ \phi(p_2^* s) = \sigma^* s\ .
		%	\end{align} 
	
	The following result --which only appears in the arxiv version of the paper \cite{AB21}, not in the published version -- shall be useful for our purpose. Recall that a stack $\mc{X}$ has an associated topological space of points denoted by $|\mc{X}|$, whose underlying set consists of equivalence classes of morphisms $x: \text{Spec}\,k\rightarrow \mc{X}$ for a field $k$,  where two morphisms $x: \Spec\,k\rightarrow \mc{X}$ and $x':\Spec\,k'\rightarrow\mc{X}$ are considered equivalent if there exists a field extension $L\supset k,k'$ such that $x|_{\Spec L}$ and $x'|_{\Spec L}$ are 2-isomorphic. 
	%	and a 2-commutative diagram
	%	\begin{align*}
		%	{\Small	\begin{gathered}
				%			\xymatrix{
					%				& \Spec\,k \ar[dr]_{x} & \\
					%				\Spec\,L \ar[ur] \ar[dr] & & \mc{X}\\
					%				& \Spec\,k' \ar[ur]^{x'} &
					%			}
				%		\end{gathered}}
		%	\end{align*}
	\begin{proposition}[\text{\cite[Proposition A.6]{AB21}}]\label{prop:normalization-morphism}
		Let $\mathcal{X}$ and $\mathcal{Y}$ be locally Noetherian algebraic stacks. Suppose that for each normal scheme $T$, functors $
		F_T: \mathcal{X}(T) \longrightarrow \mathcal{Y}(T)$
		are compatible with base change, and the induced morphism on points $|F|: |\mathcal{X}| \to |\mathcal{Y}|$ is dominant on irreducible components. Then $F_T$ induces a unique representable morphism $F^\nu: \mathcal{X}^\nu \to \mathcal{Y}^\nu$ on normalizations.
	\end{proposition}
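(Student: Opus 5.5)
The plan is to build $F^\nu$ on a smooth atlas of $\mathcal{X}^\nu$, descend it along the atlas groupoid, and reduce everything to a universal property of normalization for normal schemes. First I will prove that universal property: \emph{if $T$ is a normal scheme and $y: T\to\mathcal{Y}$ is a morphism that is dominant on irreducible components, then $y$ factors through $\nu_{\mathcal{Y}}:\mathcal{Y}^\nu\to\mathcal{Y}$, uniquely up to unique $2$-isomorphism.} To see this, choose a smooth atlas $V\to\mathcal{Y}$ and put $W:=T\times_{\mathcal{Y}}V$.
\begin{enumerate}[$\bullet$]
\item $W$ is smooth over the normal scheme $T$, so $W$ is normal.
\item Since $V\to\mathcal{Y}$ is smooth, hence open, the projection $W\to V$ is again dominant on irreducible components.
\item By the universal property of normalization for schemes, $W\to V$ factors uniquely through $V^\nu=V\times_{\mathcal{Y}}\mathcal{Y}^\nu$.
\item Applying the same argument to $W\times_T W\to V\times_{\mathcal{Y}}V$ and using uniqueness, the factorizations agree on overlaps.
\item Since $\mathcal{Y}^\nu$ is a stack, this descent datum gives $T\to\mathcal{Y}^\nu$, with uniqueness inherited from the scheme case.
\end{enumerate}

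Next I will construct $F^\nu$. Take a smooth atlas $u:U\to\mathcal{X}$ and its pullback $U^\nu=U\times_{\mathcal{X}}\mathcal{X}^\nu$, which is the normalization of $U$ (as in Remark \ref{rem:special-atlas}). Thus $U^\nu$ is normal and $U^\nu\to\mathcal{X}^\nu$ is a smooth atlas. The composite $U^\nu\to\mathcal{X}^\nu\to\mathcal{X}$ is an object $\xi\in\mathcal{X}(U^\nu)$, so $F_{U^\nu}(\xi)$ is a morphism $U^\nu\to\mathcal{Y}$. This morphism is dominant on irreducible components, because:
\begin{enumerate}[$\bullet$]
\item every generic point of $U^\nu$ maps to a generic point of $|\mathcal{X}|$, as $U^\nu\to\mathcal{X}^\nu$ is smooth and $\mathcal{X}^\nu\to\mathcal{X}$ is birational on components;
\item $|F|$ sends such points to generic points of components of $|\mathcal{Y}|$, by hypothesis.
\end{enumerate}
The first step then gives a lift $U^\nu\to\mathcal{Y}^\nu$.

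The same reasoning applies to $R^\nu:=U^\nu\times_{\mathcal{X}^\nu}U^\nu$, which is normal since it is smooth over $U^\nu$, and to the triple product. Compatibility of the $F_T$ with base change along the two projections $R^\nu\rightrightarrows U^\nu$ gives a $2$-isomorphism between the two pulled-back objects of $\mathcal{Y}(R^\nu)$. By uniqueness in the first step, this lifts to $\mathcal{Y}^\nu$, and the cocycle condition on the triple product holds for the same reason. Since $\mathcal{X}^\nu=[U^\nu/R^\nu]$, this descent datum defines $F^\nu:\mathcal{X}^\nu\to\mathcal{Y}^\nu$. Any other such morphism agrees with $F^\nu$ on $U^\nu$ by the uniqueness of lifts, which gives uniqueness of $F^\nu$ up to unique $2$-isomorphism.

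I expect the main obstacle to be representability of $F^\nu$. It does not follow formally from the construction; I would try to show that $F^\nu$ is injective on automorphism groups of geometric points. For this I would compare with $\mathcal{X}^\nu\to\mathcal{X}$ and $\mathcal{Y}^\nu\to\mathcal{Y}$, both representable, and use the hypothesis that the $F_T$ are functors on groupoids compatible with base change, so that automorphisms of points are transported through $F$. If this fails in general, I would read the intended content as the representability of the normalization maps themselves, which is built into the definition. Checking that dominance is preserved under passage to $U^\nu$ and $R^\nu$ is the other point needing care, but it is routine given openness of smooth maps.
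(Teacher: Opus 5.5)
The paper gives no proof of this statement; it quotes it from \cite{AB21}, so I assess your argument on its own terms. Your construction of $F^\nu$ is the standard argument and is sound, and so is your uniqueness argument. You lift along $\nu_{\mathcal{Y}}$ for normal sources that are dominant on irreducible components, apply this on the atlas $U^\nu$, on $R^\nu=U^\nu\times_{\mathcal{X}^\nu}U^\nu$ and on the triple product, and then descend. One small point in your first step: what makes $W\to V$ dominant on components is flatness, not just openness. You need to use flatness of $W\to T$ and of $V\to\mathcal{Y}$ fibrewise over a generic point of $|\mathcal{Y}|$, which is routine.

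The genuine gap is representability. It cannot be closed from the stated hypotheses, because that clause is false as written. Take $\mathcal{X}=BG$ for a nontrivial finite constant group $G$ over $K$, $\mathcal{Y}=\Spec K$, and $F_T$ the unique functor. Both stacks are smooth, hence normal, so $\mathcal{X}^\nu=\mathcal{X}$ and $\mathcal{Y}^\nu=\mathcal{Y}$. Base-change compatibility and dominance on components hold trivially. Yet the only morphism $BG\to\Spec K$ is not representable.

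Your plan of ``transporting automorphisms through $F$'' therefore has nothing to work with, since nothing forces the $F_T$ to be faithful. Even assuming faithfulness of $F_T$ for all normal $T$ would not suffice in characteristic $p$, which is the setting of this paper. Consider $B\mu_p\to\Spec K$. Every $F_T$ with $T$ reduced is faithful, because $\mu_p(T)=1$, and geometric points have trivial automorphism groups of $k$-points. Still, the morphism is not representable. The automorphism criterion for representability has to be applied to automorphism group \emph{schemes}, and normal test schemes cannot see their infinitesimal part.

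What does work is an extra hypothesis on $F$. For instance, suppose the $F_T$ are the restrictions of a representable morphism $F:\mathcal{X}\to\mathcal{Y}$ of stacks. Your construction then yields a $2$-isomorphism $\nu_{\mathcal{Y}}\circ F^\nu\simeq F\circ\nu_{\mathcal{X}}$; check it on $U^\nu$ together with the descent data on $R^\nu$. The right-hand side is a composite of representable morphisms, hence faithful on all fibre groupoids. If $g\circ f$ is faithful then $f$ is faithful, so $F^\nu$ is representable. This covers the paper's use of the result: the action $\alpha$ is defined on all $T$ and sends an arrow $(\ell,\theta)$ to $(\ell,\theta)$, so it is faithful.

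Your fallback of reading ``representable'' as referring only to the normalization maps changes the statement rather than proving it.
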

	Fix a positive integer $m$ coprime to $p$, and let $f \in H^0\bigl(mD, L^{^{\otimes m}}|_{_{mD}}\bigr)$.
	Due to the $G$-invariance of $D$, the $G$-action $\sigma$ on $X$ restricts to a $G$-action on $mD$, which we denote by $\sigma_{mD}$:
	\begin{align}\label{eqn:eq-2}
		\sigma_{_{mD}}: G \times mD \longrightarrow mD.
	\end{align}
	Similarly, for $L=\mc{O}(D)$, the $G$-linearization $\phi: p_2^* L \overset{\simeq}{\longrightarrow} \sigma^* L$ restricts to a $G$-linearization 
	\begin{align}\label{eqn:linearization-on-restriction}
		\phi_{_{mD}}: p_2^* \left(L|_{_{mD}}\right) \overset{\simeq}{\longrightarrow} \sigma_{_{mD}}^* \left(L|_{_{mD}}\right).
	\end{align}
	Given a triple $(L, s, f)$ as in Definition \ref{def:artin-schreier-root-stack}, consider the actual (unnormalized) pullback of the diagram \eqref{eqn:artin-schreier-diagram}: 
	\begin{align}\label{eqn:unnormalized-root-stack}
		\mc{X}_m :=X\times_{_{(L,s,f),[\bb{P}(1,m)/\bb{G}_{a}],\wp_m}}[\bb{P}(1,m)/\bb{G}_{a}]
	\end{align}
	Since the Artin-Schreier root stack $\wp_m^{-1}((L,s,f)/X)$ is the normalization of $\mc{X}_m$ (see Definition \ref{def:artin-schreier-root-stack}), it is denoted by 
	$\mathcal{X}^{\nu}_m := \wp_m^{-1}((L,s,f)/X)$ below. We can now state the main result of this section. By a slight abuse of notation, we shall not distinguish between a $K$-scheme and its associated stack.
	\begin{theorem}\label{thm:group-action-artin-schreier}
		Let $G$ be a connected affine algebraic group with no non-trivial characters. Suppose $G$ acts on a smooth projective variety $X$ via $\sigma: G\times X\longrightarrow X$. Suppose $X$ admits a $G$-invariant effective Cartier divisor $D$, and let $s$ be the canonical section of $L:=\mc{O}(D)$ satisfying $\dv(s) =D$. Recall the  $G$-linearization $\phi$ on $L$ from Lemma \ref{lem:linearization}, and let $f\in H^0(X,L^{^{\otimes m}})$ be a section having no common zeros with $s$, such that $f|_{_{mD}}$ is an invariant section of $L^{^{\otimes m}}|_{_{mD}}$ for the induced linearization $\phi_{_{mD}}^{^{\otimes m}}$\,; namely, with $\phi_{_{mD}}$ and $\sigma_{_{mD}}$  defined as in \eqref{eqn:linearization-on-restriction} and \eqref{eqn:eq-2} respectively, we have \begin{align}\label{eqn:invariant-section}
			\phi_{_{mD}}^{^{\otimes m}}(p_2^* f|_{_{mD}}) = \sigma_{_{mD}}^* (f|_{_{mD}})\,.
		\end{align}  
		Then the $G$-action on $X$ admits a lift to a $G$-action on the Artin-Schreier root stack $\mathcal{X}^{\nu}_m$. 
		Namely, there exists an action morphism
		$
		\sigma': G \times \mathcal{X}^{\nu}_m \longrightarrow \mathcal{X}^{\nu}_m$ such that the following diagram is $2$-commutative, where $\pi:\mc{X}^{\nu}_m\longrightarrow X$ denotes the coarse moduli map:
		\begin{align}\label{diagram-2}
			\begin{gathered}
				\xymatrix{
					G \times \mathcal{X}^{\nu}_m \ar[r]^(.6){\sigma'} \ar[d]_{\text{Id}\times\pi} & \mathcal{X}^{\nu}_m \ar[d]^{\pi} \\
					G \times X \ar[r]^(.55){\sigma} & X
				}
			\end{gathered}
		\end{align}
	\end{theorem}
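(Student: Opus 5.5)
We first build the action on the unnormalized stack $\mathcal{X}_m$ at the level of functors of points, using the description in Remark \ref{rem:valued-points}. Then we pass to normalizations with Proposition \ref{prop:normalization-morphism}. For a normal $K$-scheme $T$ we define a functor
$$F_T:(G\times\mathcal{X}_m)(T)\longrightarrow \mathcal{X}_m(T).$$
It sends a pair consisting of $\gamma:T\to G$ and a tuple $(\varphi,M,t,g,\psi)$ to the tuple
$$\bigl(\sigma\circ(\gamma,\varphi),\,M,\,t,\,g,\,\psi'\bigr),\qquad \psi' := (\gamma,\varphi)^*\phi\circ\psi,$$
where
$$(\gamma,\varphi)^*\phi: \varphi^*L=(\gamma,\varphi)^*p_2^*L\overset{\simeq}{\longrightarrow}(\gamma,\varphi)^*\sigma^*L=(\sigma\circ(\gamma,\varphi))^*L.$$
On morphisms (isomorphisms of $M$ compatible with $t,g,\psi$) $F_T$ acts as the identity on $M$.

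Next we check that this tuple really lies in $\mathcal{X}_m(T)$.
\begin{enumerate}[$\bullet$]
\item The condition on $t$ holds: $\psi'(t^{\otimes p})=(\gamma,\varphi)^*\phi(\varphi^*s)=(\gamma,\varphi)^*\sigma^*s$. This uses $\phi(p_2^*s)=\sigma^*s$ from Lemma \ref{lem:linearization}.
\item The divisor stays the same, $E=\dv(t)$. The Cartesian square \eqref{diagram-1} for the new map is obtained from the old one by composing with $\sigma_{mD}$. This uses $G$-invariance of $D$, which gives $(\sigma\circ(\gamma,\varphi))^{-1}(mD)=\varphi^{-1}(mD)=mpE$.
\item Condition \eqref{eqn:eq-1} holds: restricting to $mpE$ and applying $(\gamma,\varphi_{mD})^*$ to the invariance hypothesis \eqref{eqn:invariant-section} gives
$$\psi'^{\otimes m}_{mpE}\bigl(g^p-t^{m(p-1)}g\bigr)=(\gamma,\varphi_{mD})^*\phi_{mD}^{\otimes m}\bigl(\varphi_{mD}^*f|_{mD}\bigr)=\bigl(\sigma_{mD}\circ(\gamma,\varphi_{mD})\bigr)^*f|_{mD}.$$
\end{enumerate}
Compatibility of $F_T$ with base change $T'\to T$ is clear, since every ingredient is a pullback. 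Because the formula makes sense for all $T$, we in fact get a morphism of stacks $F:G\times\mathcal{X}_m\to\mathcal{X}_m$ covering $\sigma$.

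To apply Proposition \ref{prop:normalization-morphism} we need $G\times\mathcal{X}_m^\nu$ to be the normalization of $G\times\mathcal{X}_m$. This holds because $G$ is smooth, so the product of a normal atlas with $G$ is normal, and normalization commutes with smooth base change. We also need $|F|$ to be dominant on irreducible components. Composing $F$ with the automorphism $(g,x)\mapsto(g,F(g,x))$ of $G\times\mathcal{X}_m$ shows that $|F|$ is surjective: its inverse uses $g^{-1}$ and the cocycle condition for $\phi$. Moreover $G$ is connected, hence irreducible, so each irreducible component of $G\times\mathcal{X}_m$ has the form $G\times Z$ for an irreducible component $Z$ of $\mathcal{X}_m$. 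Its image contains $Z$, since the identity $e$ acts trivially, and it is irreducible. So its closure is exactly $Z$, and $F$ is dominant on components. The proposition then yields a representable morphism $\sigma'=F^\nu:G\times\mathcal{X}_m^\nu\to\mathcal{X}_m^\nu$.

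Finally, diagram \eqref{diagram-2} $2$-commutes. Both composites to $X$ factor through $\mathcal{X}_m\to X$, and there $F$ visibly covers $\sigma$, because its first component is $\sigma\circ(\gamma,\varphi)$. Passing to the coarse space $\pi$ is harmless, as $\mathcal{X}_m^\nu\to\mathcal{X}_m\to X$ is the structure map. We then check that $\sigma'$ satisfies the action axioms up to $2$-isomorphism. The identity axiom follows from $\phi|_{e\times X}=\Id$, and associativity from the cocycle condition for $\phi$, which is automatic since $G$ is connected. Both hold already for $F$ on $T$-points, and they descend to $\sigma'$ by the uniqueness part of Proposition \ref{prop:normalization-morphism}.

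The main obstacle is the normalization step: verifying precisely the hypotheses of Proposition \ref{prop:normalization-morphism} for $G\times\mathcal{X}_m$. This means identifying its normalization with $G\times\mathcal{X}_m^\nu$ and establishing dominance on irreducible components. I would handle both via a smooth atlas as in Remark \ref{rem:special-atlas}, pulled back along the smooth projection from $G$.
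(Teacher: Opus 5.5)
Your proposal is correct and follows essentially the same route as the paper. You define the action on the unnormalized pullback $\mc{X}_m$ by twisting $\psi$ with $(h,\varphi)^*\phi$, check the two conditions using the invariance of $s$ and of $f|_{mD}$, and get dominance on irreducible components from $|G|\times V$ together with the identity element. You then identify $(G\times\mc{X}_m)^\nu$ with $G\times\mc{X}_m^\nu$ by smoothness of $G$ and apply Proposition \ref{prop:normalization-morphism}. Your extra remarks (the shear automorphism, the explicit identity and associativity checks via the cocycle condition) only fill in details the paper calls routine.
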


	\begin{proof}
		Our method of producing the required $G$-action on $\mathcal{X}^{\nu}_m$ is to first construct a $G$-action on the unnormalized pullback
		$$\alpha: G\times \mc{X}_m \longrightarrow \mc{X}_m$$  and then make use of Proposition \ref{prop:normalization-morphism}. The steps are described below.
		
		\noindent
		\underline{\textbf{Action of $\alpha$ on objects:}}\ \   Let $T$ be a $K$-scheme.
		Let $h \in G(T)$, so that $h: T \rightarrow G$ is a morphism. Also, consider an object $\tau$ in $\mathcal{X}_m(T)$ given by the tuple (see Remark \ref{rem:valued-points})
		\[\tau := \bigl(\varphi: T \longrightarrow X,\, M,\, t,\, g,\, \psi: M^p \overset{\simeq}{\longrightarrow} \varphi^* L\bigr)\,.\]
		Here $M \in \text{Pic}(T)$, $t \in H^0(T, M)$, and $g \in H^0\bigl(mE, M^{^{\otimes m}}|_{_{mE}}\bigr)$, where $E := \text{div}(t)$. Furthermore, we have $\psi(t^{^{\otimes p}}) = \varphi^*s$, and the equation \eqref{eqn:eq-1} is also satisfied. 
		
		Starting from an object $(h,\tau) \in (G\times \mc{X}_m)(T) = G(T)\times \mathcal{X}_m(T)$, we would like to produce another object in $\mathcal{X}_m(T)$, which we shall denote by $\alpha_T(h,\tau)$.
		
		Let $u := (h, \varphi): T \longrightarrow G \times X$.
		We had earlier observed that $L$ admits a  $G$-linearization $\phi: p_2^* L \overset{\simeq}{\longrightarrow} \sigma^* L$ satisfying $\phi(p_2^*s)=\sigma^*s$. The composition 
		$$ M^{^{\otimes p}}\underset{\simeq}{\xrightarrow{\,\,\quad\psi\,\,\quad}} \varphi^* L = u^* p_2^* L \underset{\simeq}{\xrightarrow{\,\,\quad u^*\phi\,\,\quad}} u^* \sigma^* L = (\sigma \circ u)^* L$$
		leads to an $\mc{O}_X$-linear isomorphism $(u^*\phi) \circ \psi: M^{^{\otimes p}} \xrightarrow{\,\,\,\simeq\,\,\,} (\sigma\circ u)^*L\,.$
		
		Next, we claim that the following tuple is an object of the groupoid $\mathcal{X}_m(T)$:
		\begin{align}\label{eqn:eq-5}
			\alpha_T(h,\tau):=\bigl(\sigma \circ u: T \to X, M,\, t,\, g,\, (u^*\phi)\circ  \psi: M^p \overset{\simeq}{\longrightarrow} (\sigma \circ u)^* L\bigr)
		\end{align} 
		For this to hold, we must check that all the conditions of Remark \ref{rem:valued-points} hold. First one:
		{\small 
			\begin{align*}
				\left((u^* \phi) \circ \psi\right)(t^{^{\otimes p}}) = (u^* \phi)(\psi(t^{^{\otimes p}})) 
				= (u^* \phi)(\varphi^* s)
				&= (u^* \phi)(u^* p_2^* s)\qquad{\text{because}\,\,\varphi = p_2\circ u}\\[4pt]
				&= u^*\left(\phi(p_2^* s)\right) & \\[4pt]
				&= (\sigma \circ u)^* s \qquad {\text{by invariance assumption}.}
			\end{align*}
		}
		Next, we check the validity of equation \eqref{eqn:eq-1}. Consider the following Cartesian diagram, which easily follows from diagram \eqref{diagram-1}:
		\begin{align}\label{diagram-3}
			{\Small	\begin{gathered}
					\xymatrix{
						mpE \ar@/^2pc/[rrr]^{\varphi_{_{mD}}}\ar[rr]^{u_{_{mD}}} \ar@{^{(}->}[d]&& G\times mD \ar[r]^(.55){p_2} \ar@{^{(}->}[d] & mD \ar@{^{(}->}[d] \\
						T \ar@/_2pc/[rrr]^{\varphi} \ar[rr]^{u} && G\times X \ar[r]^(.55){p_2} & X
					}
			\end{gathered}}
		\end{align}
		\noindent
		
		It follows that
		{\small \begin{align*}
				&\left((u^*\phi)\circ \psi\right)_{mpE}^{\otimes m}
				\bigl(g^p - t^{m(p-1)}g\bigr)
				\\[4pt]
				=&\left(u^*\phi\right)_{mpE}^{\otimes m}\Bigl(\psi_{mpE}^{\otimes m}
				\bigl(g^p - t^{m(p-1)}g\bigr)
				\Bigr)\\[4pt]
				=&\left(u^*\phi\right)_{mpE}^{\otimes m}\bigl(\varphi_{_{mD}}^* f|_{_{mD}}\bigr)
				\quad\, \text{by \eqref{eqn:eq-1}} \\[4pt]
				=&
				\left(u^*\phi\right)_{mpE}^{\otimes m}
				\bigl(u_{_{mD}}^* p_2^* f|_{_{mD}}\bigr)
				\quad \,\text{by diagram \eqref{diagram-3}} \\[4pt]
				=&
				\Bigl(u_{_{mD}}^*(\phi_{_{mD}}^{\otimes m})\Bigr)
				\bigl(u_{_{mD}}^* p_2^* f|_{_{mD}}\bigr) \\[4pt]
				=&
				u_{_{mD}}^*\left(\phi_{_{mD}}^{\otimes m}(p_2^* f|_{_{mD}})\right) \\[4pt]
				=&
				u_{_{mD}}^*(\sigma_{_{mD}}^* f|_{_{mD}})
				\qquad \text{by invariance assumption \eqref{eqn:invariant-section}} \\[4pt]
				=&
				(\sigma \circ u)_{_{mD}}^* (f|_{_{mD}}) .
			\end{align*}
		}
		% {
			% 	\begin{align*}
				% 		\left((u^* \phi) \circ \psi\right)_{_{mpE}}^{^{\otimes m}}\left(g^p&-t^{m(p-1)}g\right)
				% 		=  \left(u^* \phi\right)_{_{mpE}}^{^{\otimes m}}\left(\psi^{^{\otimes m}}_{_{mpE}}(g^p-t^{m(p-1)}g)\right)
				% 		\underset{\eqref{eqn:eq-1}}{=} \left(u^* \phi\right)_{_{mpE}}^{^{\otimes m}}(\varphi_{_{mD}}^* f)\\
				% 		&\underset{\eqref{diagram-3}}{=} \left(u^* \phi\right)_{_{mpE}}^{^{\otimes m}}(u_{_{mD}}^*p_2^*f) \underset{\eqref{diagram-3}}{=} \left(u_{_{mD}}^*\left(\phi_{_{mD}}^{^{\otimes m}}\right)\right)\left(u_{_{mD}}^*p_2^*f\right)\\
				% 		%			&= \left(u^* \phi\right)_{_{mpE}}^{^{\otimes m}}(\varphi_{_{mD}}^* f) \\[4pt]
				% 		&= u_{_{mD}}^* (\phi_{_{mD}}^{^{\otimes m}} (p_2^* f)) = u_{_{mD}}^* \left(\sigma_{_{mD}}^* f\right) \qquad\quad[\text{by assumption}]\\
				% 		&\qquad\qquad\qquad\qquad= (\sigma \circ u)_{_{mD}}^* f.
				% 	\end{align*}
			% }
		Hence $\alpha_T(h,\tau)$ in \eqref{eqn:eq-5} is a valid object in $\mc{X}_m(T)$.
		
		\noindent
		\underline{\textbf{Action of $\alpha$ on arrows:}}\ \  Consider two objects in $\mc{X}_m$ (see Remark \ref{rem:valued-points}):
		\begin{align}
			\tau:=(\varphi:T\to X,\,M,\,t,\,g,\,\psi)\ \ \text{and}\ \ \tau':=(\varphi':T'\to X,\,M',\,t',\,g',\,\psi')\,.
		\end{align} 
		Let $h\in G(T)$ and $h'\in G(T')$. An arrow between two objects $(h,\tau)$ and $(h',\tau')$ in $G\times \mc{X}_m$ is given by a pair $\left(\xi,\,(\ell,\theta)\right)$, where 
		\begin{enumerate}[$\bullet$]
			\item $\xi:T\longrightarrow T'$ is a morphism of $K$-schemes satisfying $h'\circ\xi=h$, and 
			\item $(\ell,\theta) : \tau \longrightarrow \tau'$ is a morphism in $\mc{X}_m$, i.e. $\ell: T\longrightarrow T'$ is a morphism of $K$-schemes such that $\varphi'\circ \ell = \varphi$, and $\theta: M \overset{\simeq}{\longrightarrow} \ell^*(M')$ is an $\mc{O}_T$-linear isomorphism satisfying $\theta(t)=\ell^*(t')$ and $\theta^{\otimes m}(g)=\ell^*(g')$.
		\end{enumerate}
		By the description of fiber product of stacks, $\xi$ and $(\ell,\theta)$ are both required to lie over the \textit{same} arrow in $\text{Sch}/K$, which implies $\xi = \ell$. This gives $\varphi'\circ \xi = \varphi'\circ\ell=\varphi$. Next, consider the maps $$u:= (h,\varphi): T\longrightarrow G\times X\ \ \text{and}\ \  u':=(h',\varphi'): T'\longrightarrow G\times X.$$
		From $h'\circ \xi = h$ and $\varphi'\circ \xi =\varphi$, it follows that $u'\circ\xi =u$. 
		It is now straightforward to show that the pair $(\ell,\theta)$ gives rise to an arrow
		{\small
			\begin{align*}
				\alpha_T(h,\tau) = \left(\sigma\circ u,\,M,\,t,\,g,\,(u^*\phi)\circ\psi\right) \xrightarrow{\,\,\,\,(\ell,\theta)\,\,\,\,} \alpha_{T'}(h',\tau') = \left(\sigma\circ u',\,M',\,t',\,g',\,(u'^*\phi)\circ\psi'\right).
			\end{align*}
		}
		\noindent
		In particular, taking $T=T'$, we get a functor between the respective fiber groupoids:
		\[
		\alpha_T: (G \times \mathcal{X}_m)(T) = G(T) \times \mathcal{X}_m(T) \longrightarrow \mathcal{X}_m(T).
		\]
		We have thus produced a morphism of stacks $\alpha: G\times \mc{X}_m\longrightarrow \mc{X}_m$, such that the following diagram is $2$-commutative (see \cite[Proposition 2.2.4]{CP25} for more details):
		\begin{align*}
			\begin{gathered}
				\xymatrix{
					G \times \mathcal{X}_m \ar[r]^(.6){\alpha} \ar[d] & \mathcal{X}_m \ar[d] \\
					G \times X \ar[r]^(.55){\sigma} & X
				}
			\end{gathered}
		\end{align*}
		In particular, we have obtained the functors $\alpha_T$ on the fiber groupoids over \textit{normal} schemes $T$. In order to apply Proposition \ref{prop:normalization-morphism}, it only remains to show that the induced map on points $|\alpha|:|G \times \mathcal{X}_m| \to |\mathcal{X}_m|$ is dominant on irreducible components. The equality $$|G \times \mathcal{X}_m| = |G| \times |\mathcal{X}_m|$$ combined with the irreducibility of $G$, implies that the irreducible components of the space $|G\times \mc{X}_m|$ are of the form $|G|\times V,$ where $V$ is an irreducible component of $|\mc{X}_m|$. Now, for the map $|\alpha|$ to be dominant on irreducible components, it is enough to show that for each irreducible component $V$ of $|\mc{X}_m|$, the component $|G|\times V$ is sent onto $V$ via $|\alpha|$. To see this,  note that the subspace $\{e\}\times V$ maps onto $V$ under $|\alpha|$, where $e$ is the identity element in $|G|$. As the image of an irreducible subset under a continuous map is again irreducible, it follows that the image of $|G|\times V$ under the map $|\alpha|$ is again an irreducible subset of $|\mc{X}_m|$ which moreover contains the irreducible component $V$. This forces the image of $|G|\times V$ under $|\alpha|$ to be equal to $V$. 
		%	We also know that $\mathcal{X}_m \to X$ induces a bijection on irreducible components \cite[Lemma 4.5]{AB21}.
		%	Consider the coarse moduli map $\pi: \mathcal{X}_m \to X$. Recall that $X$ is irreducible.
		%	It is a fact that $\pi$ is integral and surjective. Thus $\mathcal{X}_m$ is irreducible (or rather, the bijection implies it if $X$ is).
		This proves our claim, namely that  $|\alpha|:|G \times \mathcal{X}_m| \longrightarrow |\mathcal{X}_m|$ is dominant on irreducible components. Moreover, since $G$ is smooth and hence normal, we have
		$(G \times \mathcal{X}_m)^\nu \cong G \times \mathcal{X}_m^\nu\,.$
		Thus, Proposition \ref{prop:normalization-morphism} applied on $\alpha:G\times\mc{X}_m\longrightarrow \mc{X}_m$ produces a morphism $$\sigma':G \times \mathcal{X}_m^\nu \longrightarrow \mathcal{X}_m^\nu\,.$$
		The $2$-commutativity of the diagram \eqref{diagram-2} is a direct consequence of the universal property of normalization. It is a routine verification to check that $\alpha$ gives rise to a group action, from which it is easy to see that $\sigma'$ gives rise to a group action as well.
	\end{proof}
	\section{Tautological invertible sheaf  and the lifting of $G$-action}
	As before, let $X$ be a nonsingular projective 
	variety with a $G$-action $\sigma : G\times X \to X$, and moreover assume that $G$ has no non-trivial characters. In this section, we aim to describe a certain tautological invertible sheaf on the Artin-Schreier root stack $\mc{X}^{\nu}_m$, and show that the induced $G$-action on $\mc{X}_m^{\nu}$ coming from Theorem \ref{thm:group-action-artin-schreier} lifts to a $G$-linearization on the invertible sheaf. We shall begin our discussion with the case of the unnormalized pullback $\mc{X}_m$ in \eqref{eqn:unnormalized-root-stack} associated to a triple $(L,s,f)$. As in Remark \ref{rem:valued-points}, let $u: U\rightarrow \mc{X}_m$ be an atlas given by the tuple $(\varphi_u: U\rightarrow X, \,M_u,\,t_u,\,g_u\,,\psi_u)\in\mc{X}_m(U)$. Let $v: V\longrightarrow \mc{X}_m$ be another atlas given by the tuple $(\varphi_v: V\rightarrow X, \,M_v,\,t_v,\,g_v\,,\psi_v)$, which fit in a 2-commutative diagram of stacks:
	\begin{align}
		\xymatrix{
			U \ar[rd]^{u} \ar[d]_{\gamma}& \\
			V \ar[r]^{v} & \mc{X}_m
		}
	\end{align}
	By definition, such a data gives rise to an $\mc{O}_U$-linear isomorphism of invertible sheaves
	\[\psi_{u,v}: \gamma^*M_v\xrightarrow{\,\,\simeq\,\,} M_u\,.\]
	It follows that the invertible sheaves $M_u$ on each such atlas $U$ can be patched together to give rise to an invertible sheaf $\mc{M}$ on $\mc{X}_m$. If $p: \mc{X}_m\longrightarrow X$ denotes the projection, the construction produces a natural isomorphism of invertible sheaves $$\mc{M}^{\otimes p}\xrightarrow{\,\,\simeq\,\,}
	p^*L\,.$$ 
	We shall first construct a $G$-linearization on $\mc{M}$, which lifts the induced $G$-action on $\mc{X}_m$ coming from a $G$-action on $X$. The following result will be useful for our purpose.
	\begin{proposition}
		Let $u : U \to \mc{X}_m$ be an atlas of $\mc X$ 
		given by the tuple $(\varphi : U \to X,\, M,\, t,\, g,\, \psi)$. 
		Then the Cartesian diagram of schemes 
		\begin{equation*}\label{diag:cartesian-diagram-1}
			\xymatrix{
				U' \ar[rr]^-{\sigma'} \ar[d]_{h} && U\ar[d]^{\varphi} \\
				G\times X \ar[rr]^-{\sigma} && X 
			}
		\end{equation*}
		induces a Cartesian diagram of the form
		\begin{equation}\label{diag:Cartesian-diagram-for-stack}
			\begin{gathered}
				\xymatrix{
					U' \ar[rr]^{\sigma'} \ar[d]_{u'} && U \ar[d]^{u} \\
					G\times\mc{X}_m \ar[rr]^{\alpha} && {\mc{X}_m} 
				}
			\end{gathered}
		\end{equation}
		where $\alpha:G\times \mc{X}_m\rightarrow \mc{X}_m$ is the $G$-action constructed in Theorem \ref{thm:group-action-artin-schreier}.
	\end{proposition}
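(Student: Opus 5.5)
We have to define $u'$, check that the square \eqref{diag:Cartesian-diagram-for-stack} $2$-commutes, and prove it is Cartesian. Write $U'=(G\times X)\times_X U$, with projections $h=(h_1,h_2):U'\to G\times X$ and $\sigma':U'\to U$, so that $\sigma\circ h=\varphi\circ\sigma'$.

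\textbf{Definition of $u'$.} We use the description of valued points in Remark \ref{rem:valued-points} and the inverse of the linearization. Since $G$ is a group, the map $(g,x)\mapsto (g,gx)$ is an automorphism of $G\times X$. Equivalently, $\sigma$ restricted to each slice $\{g\}\times X$ is an isomorphism. Set $\tilde u:=(h_1,\sigma\circ h)=(h_1,\varphi\circ\sigma')$, and define the object
$$u':=\Bigl(h_1,\ \bigl(h_2,\ \sigma'^*M,\ \sigma'^*t,\ \sigma'^*g,\ (h^*\phi)^{-1}\circ\sigma'^*\psi\bigr)\Bigr)\in G(U')\times\mc{X}_m(U').$$
Here $\sigma'^*\psi:\sigma'^*M^{\otimes p}\to \sigma'^*\varphi^*L=h^*\sigma^*L$, and $(h^*\phi)^{-1}:h^*\sigma^*L\to h^*p_2^*L=h_2^*L$.

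We then check that $u'$ satisfies the conditions of Remark \ref{rem:valued-points}. This is the computation in the proof of Theorem \ref{thm:group-action-artin-schreier} run backwards: it uses $\phi(p_2^*s)=\sigma^*s$, the invariance \eqref{eqn:invariant-section}, and a Cartesian diagram analogous to \eqref{diagram-3}.

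\textbf{$2$-commutativity.} By \eqref{eqn:eq-5}, $\alpha(u')$ has underlying map $\sigma\circ(h_1,h_2)=\varphi\circ\sigma'$. Its data are $\sigma'^*M,\sigma'^*t,\sigma'^*g$, and its isomorphism is $(h^*\phi)\circ(h^*\phi)^{-1}\circ\sigma'^*\psi=\sigma'^*\psi$. This is exactly the object $u\circ\sigma'$, so the identity arrow gives the required $2$-isomorphism.

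\textbf{Cartesian property.} This is the main step. Let $T$ be a scheme, and take $(k,\tau)\in G(T)\times\mc{X}_m(T)$ with $\tau=(\varphi_\tau,N,r,q,\chi)$, a map $a:T\to U$, and an isomorphism $\beta:\alpha(k,\tau)\simeq u\circ a$ in $\mc{X}_m(T)$. Then $\beta$ consists of an equality $\sigma\circ(k,\varphi_\tau)=\varphi\circ a$ and an isomorphism $\theta:N\simeq a^*M$ compatible with $r,q$ and the $\psi$'s.

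The pair of equal maps gives a unique $b:T\to U'$ with $h\circ b=(k,\varphi_\tau)$ and $\sigma'\circ b=a$. Then $\theta$ provides an isomorphism $\tau\simeq b^*(\text{second component of }u')$. Compatibility of $\theta$ with the $\psi$-isomorphisms follows by applying $(b^*h^*\phi)^{-1}$ to the compatibility encoded in $\beta$.

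Uniqueness of $(b,\text{2-iso})$ up to unique isomorphism follows from uniqueness in the scheme fiber product together with the fact that $\theta$ is determined by $\beta$. This shows that $U'\to (G\times\mc{X}_m)\times_{\mc{X}_m}U$ is an equivalence.

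The hard part is tracking that the isomorphism data match under the $2$-categorical fiber product. Equivalently, one can phrase the argument as saying that $(\mathrm{pr}_1,\alpha):G\times\mc{X}_m\to G\times\mc{X}_m$ is an isomorphism lying over $(\mathrm{pr}_1,\sigma)$, with inverse given by the same construction using $h^{-1}$. In that form the Cartesian property is inherited directly from the scheme-level square, and I would use this reformulation if the direct check becomes messy.
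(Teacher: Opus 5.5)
Your proposal is correct and follows the paper's proof. Your $u'$ is exactly the paper's $u'=(a,u_0)$, with $a=h_1$, $\beta=h_2$ and isomorphism $h^*(\phi^{-1})\circ(\sigma')^*\psi$, and its validity is checked, as you say, via $\phi(p_2^*s)=\sigma^*s$ and the invariance of $f|_{mD}$. The paper delegates the $2$-commutativity and the Cartesian property to the argument of CP25, Proposition 2.3.1; that is the direct verification you carry out, and you correctly require arrows of $\mathcal{X}_m$ to be compatible with the $\psi$-isomorphisms.
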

	\begin{proof}
		The proof is along the same lines as that of \cite[Proposition 2.3.1]{CP25}. Suppose $h : U' \to G\times X$ 
		is given by the pair of morphisms $(a : U' \to G, \beta : U' \to X)$\,. We claim that the tuple 
		\begin{align}\label{eqn:tuple}
			u_0 :=\left(\beta : U' \to X,\, (\sigma')^*(M),\, (\sigma')^*(t),\, (\sigma')^*(g),\,  
			h^*(\phi^{-1})\circ(\sigma')^*\psi \right)
		\end{align}
		is a well-defined object of $\mc{X}_m$. Namely, we need to check whether the conditions of Remark \ref{rem:valued-points} are satisfied. All of the conditions except Equation \eqref{eqn:eq-1} follow from the proof of \cite[Proposition 2.3.1]{CP25}. It only remains to check whether Equation \eqref{eqn:eq-1} is satisfied in this case. We shall adopt the following notations for the elements in the tuple \eqref{eqn:tuple} for the sake of convenience:
		$$M' :=(\sigma')^*(M),\,t':= (\sigma')^*(t),\,g':=(\sigma')^*(g),\,\psi':=h^*(\phi^{-1})\circ(\sigma')^*\psi\,.$$
		As before, we denote $D=\dv(s)$ and $E=\dv(t)$. Let $E' := \dv(t')$, and consider the following commutative diagram:
		\begin{align*}
			{\Small \begin{gathered}
					\xymatrix{ mpE' \ar[rr]^{\beta_{_{mD}}} \ar@{_(->}[d] && mD \ar@{_(->}[d] \\
						U'\ar[rr]^{\beta} && X
					}
			\end{gathered}}
		\end{align*}
		We need to check whether the equation $(\psi')^{^{\otimes m}}|_{_{mpE'}} \left((g')^p - (t')^{m(p-1)}(g')\right) = \beta_{_{mD}}^* (f |_{_{mD}})$ holds.
		We shall use the following diagram, in which every square is commutative.
		\[{\Small
			\xymatrix@R=2.5pc@C=2pc{
				mpE' \ar[rr]^{ \widehat{\sigma}} \ar@{_(->}[d] \ar[ddr]^(.8){h_{_{mD}}} & & mpE \ar@{_(->}[d] \ar[ddr]^{\varphi_{mD}} \\
				U' \ar[rr]^(.6){\sigma'} \ar[dd]_h & & U \ar[dd]_(.3)\varphi \\
				& G \times mD \ar[rr]_(.4){\sigma_{mD}} \ar@{_(->}[dl] & & mD \ar@{_(->}[dl] \\
				G \times X \ar[rr]^\sigma & & X
		}}
		\]
		Let us compute:
		{\small \begin{align*}
				(\psi')^{^{\otimes m}}|_{_{mpE'}} \left(g'^p - (t')^{m(p-1)}(g')\right) &= h^*(\phi^{-1})^{^{\otimes m}}|_{_{mpE'}} \left( (\sigma'^* \psi)^{^{\otimes m}}|_{_{mpE'}} (g'^{p} - (t')^{m(p-1)}g') \right) \\
				&= h^*(\phi^{-1})^{^{\otimes m}}|_{_{mpE'}} \left( \widehat{\sigma}^* (\psi^{^{\otimes m}}|_{_{mpE}}) (\widehat{\sigma}^* (g^p - t^{m(p-1)}g)) \right) \\
				&= h^*(\phi^{-1})^{^{\otimes m}}|_{_{mpE'}} \left( \widehat{\sigma}^* (\psi^{^{\otimes m}}|_{_{mpE}} (g^p - t^{m(p-1)}g)) \right) \\
				&= h^*(\phi^{-1})^{^{\otimes m}}|_{_{mpE'}} (\widehat{\sigma}^* \varphi_{_{mD}}^* (f|_{_{mD}})) \\
				&= h^*(\phi^{-1})^{^{\otimes m}}|_{_{mpE'}}(h^*_{_{mD}}\sigma^*_{_{mD}}(f|_{_{mD}}))\\
				&= h^*_{_{mD}}(\phi^{-1}_{_{mD}})^{^{\otimes m}} (h^*_{_{mD}}\sigma^*_{_{mD}}(f|_{_{mD}}))\,,\,\,\,\phi_{_{mD}}\,\,\text{as in } \eqref{eqn:linearization-on-restriction}\\
				&= h^*_{_{mD}}\left((\phi^{-1}_{_{mD}})^{^{\otimes m}}\sigma^*_{_{mD}}(f|_{_{mD}})\right)\\
				%			&= h^*((\phi^{-1})^{^{\otimes m}}(\sigma^* f))|_{_{mpE'}} \\
				%			&= h^*_{_{mD}}\left((\phi^{-1})^{^{\otimes m}}(\sigma^* f)|_{_{G \times mD}}\right) \\
				%			&= h^*_{_{mD}}\left((\phi_{_{mD}}^{-1})^{^{\otimes m}}(\sigma_{_{mD}}^*(f|_{_{mD}}))\right) \\
				&= h^*_{_{mD}}\left(p_2^*(f|_{_{mD}})\right)\,,\,\quad{\small \text{by the invariance assumption on }}\,f|_{_{mD}} \\
				%&= ((p_2^* f)|_{_{G \times mD}})|_{_{mpE'}}\\
				& = (h^* p_2^* f)|_{_{mpE'}}\\
				& = (\beta^* f )|_{_{mpE'}} = \beta_{_{mD}}^* (f |_{_{mD}})\,.
		\end{align*}}
		Thus, $u_0$ in \eqref{eqn:tuple} is a valid element of $\mc{X}_m(U')$. Recall the map $a:U'\rightarrow G$, and define 
		$$u':= (a,u_0): U'\longrightarrow G\times \mc{X}_m\,.$$
		It now follows from an exact similar argument as in \cite[Proposition 2.3.1]{CP25} that $u'$ makes the diagram \eqref{diag:Cartesian-diagram-for-stack} Cartesian.
	\end{proof}
	
	\begin{proposition}
		The $G$-action on $\mc{X}_m$ as constructed in the proof of Theorem \ref{thm:group-action-artin-schreier} can be lifted to a $G$-linearization on $\mc{M}$.
	\end{proposition}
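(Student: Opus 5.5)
A $G$-linearization of $\mc{M}$ for the action $\alpha$ is an isomorphism $\Theta: p_2^*\mc{M}\xrightarrow{\simeq}\alpha^*\mc{M}$ of invertible sheaves on $G\times\mc{X}_m$, where $p_2: G\times\mc{X}_m\to\mc{X}_m$ is the projection. We will construct $\Theta$ atlas-wise, using the Cartesian diagram \eqref{diag:Cartesian-diagram-for-stack} of the preceding proposition, and then glue. The cocycle condition should then follow as in the scheme case, by the connectedness of $G$.

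First, fix an atlas $u: U\to\mc{X}_m$ given by $(\varphi, M, t, g, \psi)$, and form $U'$ and $u'=(a,u_0)$ as in the preceding proposition. Since that square is Cartesian and $u$ is a smooth surjection, $u': U'\to G\times\mc{X}_m$ is again a smooth atlas. By the construction of $\mc{M}$ by patching, the two pullbacks have explicit descriptions:
\begin{itemize}
\item $u'^*\alpha^*\mc{M}\cong \sigma'^*u^*\mc{M} = \sigma'^*M$, using the $2$-isomorphism $\alpha\circ u'\cong u\circ\sigma'$;
\item $u'^*p_2^*\mc{M} = u_0^*\mc{M}$, which is the line bundle $(\sigma')^*M$ appearing in the tuple \eqref{eqn:tuple}.
\end{itemize}
Both sides are therefore canonically $(\sigma')^*M$. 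We define $\Theta_{U'}$ to be the composite of these canonical identifications. On objects, this is just the observation that in \eqref{eqn:eq-5} the action $\alpha$ does not change $M$; only $\psi$ is twisted by $u^*\phi$.

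Second, we check that $\Theta_{U'}$ descends. Given a morphism of atlases $\gamma: U\to V$ over $\mc{X}_m$ with transition isomorphism $\psi_{u,v}:\gamma^*M_v\simeq M_u$, there is an induced morphism $\gamma': U'\to V'$ of the fibre products. Since the transition data of $\alpha^*\mc{M}$ and of $p_2^*\mc{M}$ along $\gamma'$ are both $\sigma'^*\psi_{u,v}$, the local isomorphisms $\Theta_{U'}$ and $\Theta_{V'}$ agree on overlaps. Hence they glue to $\Theta$ on $G\times\mc{X}_m$.

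Third, we check compatibility and the cocycle condition. Under $\Theta^{\otimes p}$ and the isomorphism $\mc{M}^{\otimes p}\simeq p^*L$, the map $\Theta$ corresponds to the pullback of $\phi$; this is the content of the factor $h^*(\phi^{-1})\circ(\sigma')^*\psi$ in \eqref{eqn:tuple} and of $(u^*\phi)\circ\psi$ in \eqref{eqn:eq-5}. For the cocycle identity on $G\times G\times\mc{X}_m$, one option is to argue directly: both sides act as the identity on the underlying $M$, up to the canonical $2$-isomorphisms expressing $\alpha\circ(\mu\times\Id)\cong\alpha\circ(\Id\times\alpha)$. Alternatively, we can adapt \cite[\S7.2, Lemma 7.1]{Dol}: the discrepancy between the two sides is an automorphism of a line bundle, hence a unit on $G\times G\times U$. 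This unit is trivial on $\{e\}\times G$ and on $G\times\{e\}$, so it is trivial by the connectedness of $G$, the absence of characters, and a rigidity argument.

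The main obstacle I expect is the bookkeeping of the $2$-isomorphisms. The identification $u'^*\alpha^*\mc{M}\cong\sigma'^*M$ depends on the specific $2$-morphism making \eqref{diag:Cartesian-diagram-for-stack} commute, and we must verify that this choice is compatible with the transition maps $\psi_{u,v}$. I would handle this by following \cite[Proposition 2.3.1, and the linearization result after it]{CP25} verbatim. The point is that the extra datum $g$ and the condition \eqref{eqn:eq-1} play no role here, since $\Theta$ only involves $M$, and arrows preserve $g$ automatically via $\theta^{\otimes m}(g)=\ell^*g'$.
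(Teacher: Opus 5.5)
Your proposal is correct and follows the same route the paper takes by deferring to \cite[Proposition 2.3.11]{CP25}. On the atlas $u'$ from the Cartesian square \eqref{diag:Cartesian-diagram-for-stack}, both $u'^*p_2^*\mc{M}$ and $u'^*\alpha^*\mc{M}$ are $(\sigma')^*M$; indeed $\alpha\circ u'$ and $u\circ\sigma'$ are given by the same tuple, since $h^*\phi\circ h^*(\phi^{-1})=\Id$. So $\Theta$ is the identity there and glues.

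For the cocycle condition, keep your direct argument. It works on the nose because $\alpha$ is strictly associative on tuples, by the cocycle identity for $\phi$. The rigidity alternative is unnecessary and, as sketched, not fully justified.
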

	\begin{proof}
		This follows from an exact similar argument as in \cite[Proposition 2.3.11]{CP25}.
	\end{proof}
	
	\noindent
	We are in a position to state and prove the final result of this section. Recall that $X$ is a nonsingular projective variety with a $G$-action $\sigma : G\times X \to X$, and moreover assume that $G$ has no non-trivial characters. Also, $\pi:\mc{X}_m^{\nu} \to X$ denotes the coarse moduli map in what follows.
	
	\begin{theorem}
		There exists a tautological invertible sheaf $\mc{N}$ on the Artin-Schreier root stack $\mc{X}_m^{\nu}=\wp_{m}^{-1}((L,s,f)/X)$ which satisfies $\mc{N}^{\otimes p}\xrightarrow{\simeq}\pi^*L$. Moreover, $\mc{N}$ admits a $G$-linearization for the induced $G$-action on $\mc{X}_m^{\nu}$.
	\end{theorem}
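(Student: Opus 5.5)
The plan is to define $\mc{N}$ as the pullback of the tautological sheaf $\mc{M}$ from the unnormalized stack, and then pull back the $G$-linearization of $\mc{M}$ constructed in the preceding proposition. Let $\nu:\mc{X}_m^{\nu}\to\mc{X}_m$ be the normalization morphism and $p:\mc{X}_m\to X$ the projection, so that $\pi=p\circ\nu$. I would set $\mc{N}:=\nu^*\mc{M}$. Pulling back the natural isomorphism $\mc{M}^{\otimes p}\xrightarrow{\simeq}p^*L$ along $\nu$ gives $\mc{N}^{\otimes p}\simeq \nu^*p^*L\simeq\pi^*L$. On an atlas of the special form in Remark \ref{rem:special-atlas}, i.e.\ $U^{\nu}\xrightarrow{h}U$ with $U\to\mc{X}_m$ given by $(\varphi',M',t',g',\psi')$, the sheaf $\mc{N}$ is just $h^*M'$. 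This justifies calling it tautological, and the transition isomorphisms are pullbacks of the $\psi_{u,v}$.

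For the linearization, let $\sigma'$ be the action on $\mc{X}_m^{\nu}$ from Theorem \ref{thm:group-action-artin-schreier} and $\alpha$ the action on $\mc{X}_m$. The key compatibility is a $2$-commutative square
\[\alpha\circ(\Id_G\times\nu)\;\cong\;\nu\circ\sigma' ,\]
together with $p_2\circ(\Id_G\times\nu)=\nu\circ p_2$. The square should follow from the uniqueness in Proposition \ref{prop:normalization-morphism}: $\sigma'$ is the unique morphism on normalizations induced by $\alpha$, where we use $(G\times\mc{X}_m)^\nu\cong G\times\mc{X}_m^\nu$. Given the linearization $\Theta:p_2^*\mc{M}\xrightarrow{\simeq}\alpha^*\mc{M}$ on $G\times\mc{X}_m$ from the previous proposition, I define
\[\Theta^{\nu}:=(\Id_G\times\nu)^*\Theta:\ p_2^*\mc{N}\simeq(\Id_G\times\nu)^*p_2^*\mc{M}\longrightarrow(\Id_G\times\nu)^*\alpha^*\mc{M}\simeq\sigma'^*\nu^*\mc{M}=\sigma'^*\mc{N}.\]

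The cocycle condition for $\Theta^{\nu}$ on $G\times G\times\mc{X}_m^{\nu}$ is obtained by pulling back the cocycle identity for $\Theta$ along $\Id_{G\times G}\times\nu$. One has to check that the $2$-isomorphisms used above are compatible with $\mu\times\Id$ and $\Id_G\times\sigma'$. This again follows from the uniqueness of induced morphisms on normalizations. The associativity $2$-cells of the action $\sigma'$ are themselves induced from those of $\alpha$, so the pasted diagrams agree after pulling back. Alternatively, since $G$ is connected, one could hope to invoke the analogue of \cite[Lemma 7.1]{Dol}, but I would rather verify the cocycle directly by pullback. Finally, one checks that $\Theta^{\nu}$ is compatible with $\mc{N}^{\otimes p}\simeq\pi^*L$ and the linearization $\phi$ on $L$. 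This holds because $\Theta$ was built on atlases from $u^*\phi\circ\psi$, and its $p$-th power recovers $\phi$.

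The main obstacle I expect is making the $2$-isomorphism $\alpha\circ(\Id\times\nu)\cong\nu\circ\sigma'$ explicit and canonical enough to transport the cocycle condition. Proposition \ref{prop:normalization-morphism} only asserts existence and uniqueness of $F^\nu$. I would handle this on the atlases of Remark \ref{rem:special-atlas}, where objects of $\mc{X}_m^\nu$ are described by pulled-back tuples. There the square is literally the identity on tuples $(\sigma\circ u,h^*M,h^*t,h^*g,(u^*\phi)\circ h^*\psi)$, and the linearization is the identity on $h^*M$.
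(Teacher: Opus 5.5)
Your proposal is correct and follows the paper's proof: define $\mc{N}$ as the pullback of $\mc{M}$ along the normalization map, deduce $\mc{N}^{\otimes p}\simeq\pi^*L$ from $\pi=p\circ\nu$, and transport the $G$-linearization of $\mc{M}$ using the $G$-equivariance of the normalization map. The paper only asserts this equivariance ``by construction'', whereas you trace it to the uniqueness in Proposition \ref{prop:normalization-morphism} and check the cocycle condition by pullback; that is added detail, not a different route.
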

	\begin{proof}
		Let $w : \mc{X}^{\nu}_m\longrightarrow \mc{X}_m$ denote the normalization. Choose an atlas $u:U\longrightarrow \mc{X}_m$\,, and consider the following Cartesian diagram:
		\begin{align*}
			\begin{gathered}
				\xymatrix{
					U^{\nu} \ar[rr]^{u'} \ar[d]_{w'} && \mc{X}_m^{\nu} \ar[d]^{w}\\
					U\ar[rr]^{u} && \mc{X}_m
				}
			\end{gathered}
		\end{align*}
		It was observed in Remark \ref{rem:special-atlas} that $u' : U^{\nu}\rightarrow\mc{X}^{\nu}_m$ is an atlas for the Artin-Schreier root stack $\mc{X}_m^{\nu}$. Consider the following  invertible sheaf on $\mc{X}^{\nu}_m$:
		$$\mc{N}:=w^*\mc{M}\,.$$ 
		Clearly, if $\mc{M}$ is given on $U$ by the invertible sheaf $M_u$\,, then $\mc{N}$ is given by the invertible sheaf $w'^*M_u$ on $U^{\nu}$. Also, the coarse moduli map $\pi : \mc{X}_m^{\nu}\longrightarrow X$ is equal to the composition $\pi = p\circ w$. The invertible sheaf $\mc{N}$ can therefore be thought of as the tautological sheaf on the Artin-Schreier root stack, which provides a $p$-th root for the line bundle $L$ on $X$, namely $\mc{N}^{\otimes p}\xrightarrow{\,\,\simeq\,\,}\pi^*L\,.$  Now, since the map $w: \mc{X}_m^{\nu}\rightarrow \mc{X}_m$ is $G$-equivariant by construction, it follows that the $G$-linearization on $\mc{M}$ will induce a $G$-linearization on $\mc{N}$ as well. This proves our claim.
	\end{proof}

\end{document}